\newcommand{\Spvek}[2][r]{%
  \gdef\@VORNE{1}
  \left(\hskip-\arraycolsep%
    \begin{array}{#1}\vekSp@lten{#2}\end{array}%
  \hskip-\arraycolsep\right)}
\def\vekSp@lten#1{\xvekSp@lten#1;vekL@stLine;}
\def\vekL@stLine{vekL@stLine}
\def\xvekSp@lten#1;{\def\temp{#1}%
  \ifx\temp\vekL@stLine
  \else
    \ifnum\@VORNE=1\gdef\@VORNE{0}
    \else\@arraycr\fi%
    #1%
    \expandafter\xvekSp@lten
  \fi}
\newtheorem{thm}{Theorem}[section]
\newtheorem{lem}[thm]{Lemma}
\newtheorem{rem}[thm]{Remark}
\theoremstyle{definition}
\newtheorem{defn}{Definition}[section]
\newcommand{\scr}[1]{\mathscr #1}
\definecolor{wco}{rgb}{0.5,0.2,0.3}
\def\beq{\begin{equation}}
\def\neqq{\end{equation}}
\def\benu{\begin{enumerate}}
\def\nenu{\end{enumerate}}
\numberwithin{equation}{section} \theoremstyle{remark}
\newcommand{\ua}{\uparrow}
\title{{\bf    Large Deviations Principle for SDEs with Dini Continuous Drifts} \footnote{Supported in
 part by  NNSFC (11801406).} }
\author{
{\bf  Lingyan Cheng$^{a)}$,  Xing Huang$^{a)}$}\\
\footnotesize{$^{a)}$Center for Applied Mathematics, Tianjin
University, Tianjin 300072, China}\\
\footnotesize{ xinghuang@tju.edu.cn}\\
\footnotesize{chengly@amss.ac.cn}}
\date{}
\begin{document}
\allowdisplaybreaks
\def\R{\mathbb R}  \def\ff{\frac} \def\ss{\sqrt} \def\B{\mathbf
B}
\def\N{\mathbb N} \def\kk{\kappa} \def\m{{\bf m}}
\def\ee{\varepsilon}\def\ddd{D^*}
\def\dd{\delta} \def\DD{\Delta} \def\vv{\varepsilon} \def\rr{\rho}
\def\<{\langle} \def\>{\rangle} \def\GG{\Gamma} \def\gg{\gamma}
  \def\nn{\nabla} \def\pp{\partial} \def\E{\mathbb E}
\def\d{\text{\rm{d}}} \def\bb{\beta} \def\aa{\alpha} \def\D{\scr D}
  \def\si{\sigma} \def\ess{\text{\rm{ess}}}
\def\beg{\begin} \def\beq{\begin{equation}}  \def\F{\scr F}
\def\Ric{\text{\rm{Ric}}} \def\Hess{\text{\rm{Hess}}}
\def\e{\text{\rm{e}}} \def\ua{\underline a} \def\OO{\Omega}  \def\oo{\omega}
 \def\tt{\tilde} \def\Ric{\text{\rm{Ric}}}
\def\cut{\text{\rm{cut}}} \def\P{\mathbb P} \def\ifn{I_n(f^{\bigotimes n})}
\def\C{\scr C}   \def\G{\scr G}   \def\aaa{\mathbf{r}}     \def\r{r}
\def\gap{\text{\rm{gap}}} \def\prr{\pi_{{\bf m},\varrho}}  \def\r{\mathbf r}
\def\Z{\mathbb Z} \def\vrr{\varrho} \def\ll{\lambda}
\def\L{\scr L}\def\Tt{\tt} \def\TT{\tt}\def\II{\mathbb I}
\def\i{{\rm in}}\def\Sect{{\rm Sect}}  \def\H{\mathbb H}
\def\M{\scr M}\def\Q{\mathbb Q} \def\texto{\text{o}} \def\LL{\Lambda}
\def\Rank{{\rm Rank}} \def\B{\scr B} \def\i{{\rm i}} \def\HR{\hat{\R}^d}
\def\to{\rightarrow}\def\l{\ell}\def\iint{\int}
\def\EE{\scr E}\def\no{\nonumber}
\def\A{\scr A}\def\V{\mathbb V}\def\osc{{\rm osc}}
\def\BB{\scr B}\def\Ent{{\rm Ent}}\def\3{\triangle}
\def\U{\scr U}\def\8{\infty}\def\1{\lesssim}\def\HH{\mathrm{H}}
 \def\T{\scr T}
\def\benu{\begin{enumerate}}
\def\nenu{\end{enumerate}}

\maketitle

\begin{abstract}
In this paper, using Zvonkin type transform, the large deviation principle is proved for stochastic differential equations with Dini continuous drifts, where the existed methods for large deviation principle are unavailable. The method and result are new in related fields.  Moreover, the result is also extended to a class of  degenerate stochastic differential equations with Dini continuous drifts.

\end{abstract} \noindent
 {\bf AMS subject Classification:}\ 60H35, 60H10, 60C30  \\
\noindent {\bf Keywords:} Dini continuity, Zvonkin type transform, Large deviation, Degenerate SDEs
 \vskip 2cm

\section{Introduction}
The large deviation principle (LDP for short) is proved for various stochastic differential equations (SDEs) with Lipschitz continuous drift. For instance,
Freidlin and Wentzell \cite{FW12} firstly studied the LDP for the finite dimensional setting,
where the SDE is driven by finitely many Brownian motions and its coefficients satisfy suitable regularity properties. Peszat \cite{P94} (also the references therein) investigated the LDP for stochastic partial differential equations (SPDEs) under global Lipschitz condition on the nonlinear term. Cerrai and R\"ockner \cite{CR04} obtained the LDP for  stochastic reaction-diffusion systems with multiplicative noise under local Lipschitz conditions. Moreover,  the LDP for semilinear parabolic equations on a Gelfand triple was proved by Chow in \cite{C92}. R\"ockner, Wang and Wu \cite{RWW06} established the LDP for stochastic porous media equations within the variational framework. All these papers mainly used the classical ideas of discretization approximations and the contraction principle, which was firstly developed by Freidlin and Wentzell.

Budhiraja, Dupuis and Maroulas \cite{BDM} also get the LDP of the infinite dimensional setting by the weak convergence method (see \cite{BD}). This approach is now a powerful tool which has been extensively used to prove LDP for various stochastic dynamical systems. For instance, Cerrai and Freidlin \cite{CF} established the LDP for the langevin equation, see also \cite{ BD13,  L10, YR, MSS, RZ, RZZ10,  RS,  ZX17, Z17} and the references therein for more works. There are also some results with non-Lipschitz coefficients, for instance, \cite{FZ, KS0,Lan}.

Recently, pathwise uniqueness of SDEs/SPDEs with singular drifts are proved. The main idea is to construct Zvonkin's transform (\cite{AZ}) which is a homeomorphism map to transform the original SDEs to a new one, where the singular drift is killed and the pathwise uniqueness can be obtained. This technique strongly depends on the regularity of the solution to PDE like \eqref{PDE} below with singular coefficients.  Wang \cite{W} proved the pathwise uniqueness for semi-linear SPDEs with Dini continuous drift and non-degenerate noise. In \cite{WZ}, Wang and Zhang studied existence and uniqueness for stochastic Hamiltonian system with  H\"older-Dini continuous drifts, where the noise is degenerate. There are also many other results on this topic, see \cite{Fe-Fl,GM,KR,CDR,Z} and references therein.

So far, there are no results on LDP for SDEs with singular drifts, where the existed methods, either discretization approximations or weak convergence are unavailable. The aim of this paper is to solve this problem. To this end, we need to search for new technique and Zvonkin's transform offers an effective method to regularized the singular drifts. The idea is to use Zvonkin's transform to change the SDEs with singular drifts as a new one with Lipschitz continuous coefficients, where the LDP holds. Then we can obtain the LDP for the original SDE by the inverse of Zvonkin's transform and the definition of LDP.

Throughout the paper, the following notations will be used. For $T>0$, $d\in\mathbb{N}^+$, let $C([0,T],\mathbb{R}^d)$ be all $\mathbb{R}^d$-valued and continuous functions on $[0,T]$. For a function $f$ from $\mathbb{R}^m$ to $\mathbb{R}^n$, set $\|f\|_{\infty}:=\sup_{x\in \R^m}|f(x)|$.

Before moving on, let us recall some knowledge on LDP.
 \begin{defn} \label{rate} Let $S$ be a Polish space. A function $I:S\to \mathbb{R}^1$ is called a rate function, if for any constant $c>0$, the level set $\{f; I(f)\le c\}$ is compact in $S$.
 \end{defn}
 \begin{defn}\label{FW} Let $S$ be a Polish space. We call a family of $S$-valued random variable $\{Z^\varepsilon\}_{\varepsilon\in(0,1)}$ satisfies LDP with speed function $\varepsilon^{-1}$ and rate function $I:S\to[0,\infty)$, if the following conditions hold.
\begin{itemize}
\item[(1)] For any closed subset $F\subset S$,
$$
\limsup_{\vv\rightarrow0^+}\vv\log\P(Z^{\vv}\in F)\le -\inf_{f\in F}I(f).
$$
\item[(2)] For any open subset $G\subset S$,
$$
\liminf_{\vv\rightarrow0^+}\vv\log\P(Z^{\vv}\in G)\ge -\inf_{f\in G}I(f).
$$
\end{itemize}
\end{defn}
From now on, we fix $T>0$. Next, we give an existed result Lemma \ref{LG} from \cite{FW12} which will be used in the sequel, see also the introduction in \cite{GW}. Consider SDE on $\mathbb{R}^n$:
\beq\label{b2=0}
\d \tilde{X}_t^\vv=b_1^{\varepsilon}(\tilde{X}_t^\vv)+\sqrt{\varepsilon}\sigma(\tilde{X}_t^\vv)\d
W_t, ~~~t\in [0,T],~~~\tilde{X}_0^\vv=x_0\in\mathbb{R}^n,
\end{equation}
where $\varepsilon\in(0,1)$, $b_1^{\varepsilon}: \R^n\to\R^n$, $\si: \R^n\to\R^n\otimes
\R^n$, and $(W_t)_{t\in [0,T]}$ is an $n$-dimensional Brownian motion defined
on a complete filtration probability space $(\OO, \F, (\F_t)_{t\in [0,T]},
\P)$. Without loss of generality, we assume $x_0=0$.
\begin{enumerate}
\item[{\bf(A1)}] There exists a constant $L>0$ such that for any $\varepsilon\in(0,1)$,
\begin{align}\label{Lip}\|\sigma(x)-\sigma(y)\|+|b_{1}^{\varepsilon}(x)-b_{1}^{\varepsilon}(y)|\leq L|x-y|,\ \ x,y\in\mathbb{R}^n.
\end{align}
Moreover, there exists a Lipschitz continuous function $b_1^0:\mathbb{R}^n\to\mathbb{R}^n$ such that
\begin{align}\label{lim}\lim_{\varepsilon\to0}\left\{\sup_{x\in\mathbb{R}^n}|b_{1}^{\varepsilon}(x)-b_{1}^{0}(x)|\right\}=0.
\end{align}
\end{enumerate}
Let $C([0,T],\R^n)$ be equipped with sup-norm, and define rate function $I:C([0,T],\R^n)\to[0,\infty)$ as
\beq\label{rf}
I(f) = \frac{1}{2} \inf_{f= g(h), h\in \mathcal{H}} \|h\|_H^2,\ \ f\in C([0,T],\R^n),
\neqq
where
$$
\mathcal{H} =\left\{ h \in C([0,T],\R^n); \|h\|_H^2:=\int_0^T |\dot{h}_t|^2 dt < \infty \right\}
$$
and for any $h\in \mathcal{H}$, $g(h)\in C([0,T],\R^n)$ satisfies
\begin{align}\label{gh}
(g(h))_t=\int_0^t b_1^0((g(h))_s)\d s +\int_0^t \sigma((g(h))_s)\dot{h}_s\d s, \quad t \in [0,T].
\end{align}
\begin{rem} Under {\bf(A1)}, for any $\varepsilon\in(0,1)$, \eqref{b2=0} has a uniqueness strong solution denoted by $\{(\tilde{X}_t^\vv)_{t\in[0,T]} \}$. Furthermore, {\bf(A1)} also implies that for any $h\in\mathcal{H}$, $g(h)$ defined above is the uniqueness solution to the following deterministic differential equation:
\begin{align}\label{gh0}
\d Z_t= b_1^0(Z_t)\d t +\sigma(Z_t)\dot{h}_t\d t, \quad t \in [0,T], Z_0=0.
\end{align}
\end{rem}
\beg{lem}\label{LG}
Under {\bf (A1)}, the family $\{(\tilde{X}_t^\vv)_{t\in[0,T]} \}_{\vv \in(0,1)}$ obeys an LDP on $C([0,T]; \R^n)$ with the speed function $\vv^{-1}$ and the rate function $I$ given by \eqref{rf}.
\end{lem}
The outline of this paper is organized as follows: In Section 2, we study the LDP for non-degenerate SDEs with singular drift;  In Section 3, we investigate LDP for degenerate SDEs with singular drift.
\section{LDP for Non-degenerate SDEs}
In this section, we add a small singular interruption in \eqref{b2=0}, i.e. consider the following SDE on $\R^n$:
 \beq\label{1.1} \d X_t^\vv=b_1^{\varepsilon}(X_t^\vv)+\varepsilon b_2(X_t^\vv)\d t+\sqrt{\varepsilon}\sigma(X_t^\vv)\d
W_t, ~~~t\in [0,T],~~~X_0^\vv=x_0,
\end{equation}
where $\varepsilon,\si,b_1^{\varepsilon} $ and $(W_t)_{t\in [0,T]}$  are introduced in Section 1, and $b_2: \R^n\to\R^n$ is the singular drift. Without loss of generality, we assume $x_0=0$.

To characterize the singularity of $b_2$, we introduce some definitions which comes from \cite{BGT} and \cite{WZ}.
\beg{defn}\label{D1.1}
\benu
\item[(1)]
An increasing function $\phi: [0,\infty)\to[0,\infty)$ is called a Dini function if
\begin{equation*}
\int_0^1{\frac{\phi(s)}{s}\d s}<\infty.
\end{equation*}
\item[(2)]
A function $f$ defined on the Euclidean space is called Dini continuous if
\begin{equation*}
|f(x)-f(y)|\leq \phi(|x-y|)
\end{equation*}
holds for some Dini function $\phi$.
\item[(3)]
A measurable function $\phi: [0,\infty)\to[0,\infty)$ is called a $slowly$ $varying$ function at zero (see \cite{BGT}) if for any $\delta>0$,
\begin{equation*}
\lim_{t\to0}\frac{\phi(\delta t)}{\phi(t)}=1.
\end{equation*}
\nenu
\end{defn}
Let $\D_{0}$ be the set of all Dini functions, and $\T_{0}$ the set of all slowly varying functions at zero
that are bounded away from $0$ and $\infty$ on $[\varepsilon,\infty)$ for any $\varepsilon>0$. Notice that the typical examples for functions contained in $\D_{0}\cap\T_{0}$ are $\phi(t):= (\log(1+t^{-1}))^{-\beta}$ for $\beta>1$.

To obtain the LDP for \eqref{1.1}, we make the following assumptions.
 \beg{enumerate}
\item[\bf{(A1')}] Besides {\bf (A1)}, there exists a constant $K>1$ such that $$\sup_{\varepsilon\in(0,1)}\|b_1^\varepsilon\|_{\infty}+\|b_2\|_{\infty}\leq K$$ and
\begin{equation}\label{bao3}K^{-1}I\leq \sigma\sigma^{\ast}\leq K I.
\end{equation}
\item[\bf{(A2)}] There exists $\phi\in\D_0\cap\T_0$ such that
\begin{equation}\label{phi}
|b_2(x)-b_2(y)|\leq \phi(|x-y|),\ \ x,y\in\R^n.
\end{equation}
\end{enumerate}
Under {\bf(A1')} and {\bf(A2)}, \eqref{1.1} admits a unique
non-explosive strong solution $(X_t^\vv)_{t\in[0,T]}$; see, e.g.,
\cite[Corollary 1.5]{WZ}. In fact, by Zvonkin's transform, we can kill $b_2$, see \eqref{Yv} below for more details.

Our main result is
\beg{thm}\label{T1.1}
Assume {\bf (A1')}-{\bf (A2)}, then $\{(X_t^\vv)_{t\in[0,T]} \}_{\vv \in(0,1)}$ obeys LDP on $C([0,T]; \R^n)$ with the speed function $\vv^{-1}$ and the rate function $I$ given by \eqref{rf}.
\end{thm}
\begin{rem} Due to the singularity of $b_2$, we need to give stronger condition {\bf(A1')} in Theorem  \ref{T1.1} than {\bf(A1)} in Lemma \ref{LG}, see the proof of Theorem \ref{T1.1} for more details.
\end{rem}
\subsection{Proof of Theorem \ref{T1.1}}
In order to obtain LDP for \eqref{1.1}, we adopt Zvonkin type transform to change \eqref{1.1} to a new equation with Lipschitz continuous coefficients, where the Freidlin-Wentzell's theorem (\cite{FW12}) can be available. Let $(e_i)_{i\ge1}$ be an  orthogonal basis of $\R^n.$ For any
$\lambda
>0$, consider the following $\R^n$-valued PDE:
\beq\label{PDE}\beg{split}
\scr L u_\ll+b_2+\nabla_{b_2}u_\ll=\lambda
u_\ll,
\end{split}\end{equation}
where
\beg{equation*}
\scr L:= \frac{1}{2}\sum^n_{i,j=1}{\langle
(\si\si^*)e_{i},e_{j}\rangle}\nabla_{e_{i}}\nabla_{e_{j}}.
 \end{equation*}

By \cite[Theorem 3.10]{WZ} with $d_1=0$, $d_2=n$, there exists a constant $\lambda_0>0$ such that for any $\lambda\geq \lambda_0$, the equation \eqref{PDE} has a unique solution $u_{\lambda}$ satisfying
\begin{equation}\label{u}
\|u_\lambda\|_{\infty}+\|\nn u_\lambda\|_{\infty}+\|\nabla^2 u_\lambda\|_{\infty}\le \frac{1}{2}.
\end{equation}


For any $\lambda\geq\lambda_0$, let $\theta_\lambda:\mathbb{R}^n\to\mathbb{R}^n$ be defined by $\theta_\lambda (x):=x+u_\ll(x),x\in\R^n$. By \eqref{u}, $\theta_\lambda$ is a homeomorphism on $\mathbb{R}^n$. Let $\theta_\lambda^{-1}$ denote the inverse of $\theta_\lambda$, then it holds that $\nabla \theta_\lambda^{-1}=(\nabla \theta_\lambda)^{-1}$.

We now in a position to complete the Proof of Theorem \ref{T1.1}.

\smallskip
\begin{proof}[\bf Proof of Theorem \ref{T1.1}] Throughout the whole
proof, we assume $ \lambda\geq \lambda_0$. Since
 \beq\label{Xv} \d X^\varepsilon_t=b_{1}^{\varepsilon}(X^\varepsilon_t)+\varepsilon b_{2}(X^\varepsilon_t)\d t+\sqrt{\varepsilon}\sigma(X^\varepsilon_t)\d
W_t, \ \ t\in[0,T],~~~X_0^\vv=x_0,
\end{equation}
applying It\^{o}'s formula to $\theta_\lambda (X_t^\vv)$, we deduce from \eqref{PDE} that
\begin{equation}\label{b5}
\beg{split}
\d \theta_\lambda (X_t^\vv)=\varepsilon\lambda u_\ll(X_t^\vv)\d t+(\nabla
\theta^\ll b_{1}^{\varepsilon})(X_t^\vv)\d t+\sqrt{\varepsilon}(\nabla\theta^\ll\si)(X_t^\vv)\d W_t,\ \ t\in[0,T].
\end{split}
\end{equation}
Denote $Y_t^\vv := \theta_\lambda (X_t^\vv)$, then \eqref{b5} becomes
\begin{equation}\label{Yv}
\begin{aligned}
\d Y_t^\vv
=&\varepsilon\lambda u_\ll(\theta_\lambda^{-1}(Y_t^\vv))\d t+(\nabla
\theta_\ll b_{1}^{\varepsilon})(\theta_\lambda^{-1}(Y_t^\vv))\d t+\sqrt{\varepsilon}(\nabla\theta_\ll\si)(\theta_\lambda^{-1}(Y_t^\vv))\d W_t\\
=&:\tilde{b}^\vv(Y_t^\vv)\d t+\sqrt{\vv} \tilde{\sigma}(Y_t^\vv)\d W_t,\quad t\in[0,T],\quad Y_0^\vv=\theta_\lambda (x_0),
\end{aligned}
\end{equation}
where
$$
\tilde{b}^\vv(x)=\vv\lambda u_\ll(\theta_\lambda^{-1}(x))+(\nabla
\theta_\ll b_{1}^{\vv})(\theta_\lambda^{-1}(x)), \ \ \tilde{\sigma}(x)=(\nabla\theta_\ll\si)(\theta_\lambda^{-1}(x)),\ \ x\in\mathbb{R}^n.
$$

Since $\theta_\lambda$ is a diffeomorphic operator, by {\bf(A1')} and \eqref{u}, $\tilde{b}^\vv$ and $\tilde{\sigma}$ satisfy the following conditions:
\benu
\item[(1)]
for some constant $\tilde{K}>1$, we have
$$
\|\tilde{\sigma}(x)-\tilde{\sigma}(y)\|+|\tilde{b}^\varepsilon(x)-\tilde{b}^\varepsilon(y)| \le \tilde{K}|x-y|,\ \ x,y \in \R^n.
$$
\item[(2)] Let $\tilde{b}^0:= (\nabla \theta_\ll b_{1}^{0})\circ\theta_\lambda^{-1}$, then
$$\lim_{\varepsilon\to0}\|\tilde{b}^{\varepsilon}-\tilde{b}^{0}\|_\infty=0.$$

\nenu
By Lemma \ref{LG},
$\{Y_t^\vv, t \in [0,T]\}_{\varepsilon\in(0,1)}$ satisfies the LDP in $C([0,T],\R^n)$ with the speed function $\vv^{-1}$ and the good rate function given by
\beq\label{rate function}
I^Y(f):= \frac{1}{2} \inf_{f= g^Y(h), h\in \mathcal{H}} \|h\|_H^2
\neqq
with
$$
(g^Y(h))_t=\int_0^t \tilde{b}^0((g^Y(h))_s)\d s +\int_0^t \tilde{\sigma}((g^Y(h))_s)\dot{h}_s\d s, \quad t \in [0,T].
$$
This implies that
\begin{itemize}
\item[(i)] for any constant $c>0$, the level set $\{f; I^Y(f)\le c\}$ is compact in $C([0,T];\R^n)$;
\item[(ii)]for any closed subset $F\subset C([0,T];\R^n)$,
$$
\limsup_{\vv\rightarrow0^+}\vv\log\P(Y^{\vv}\in F)\le -\inf_{f\in F}I^Y(f);
$$
\item[(iii)] for any open subset $G\subset C([0,T];\R^n)$,
$$
\liminf_{\vv\rightarrow0^+}\vv\log\P(Y^{\vv}\in G)\ge -\inf_{f\in G}I^Y(f).
$$
\end{itemize}
Define
\beq\label{rf1}
I^X(f):= \frac{1}{2} \inf_{f= g^X(h), h\in \mathcal{H}} \|h\|_H^2
\neqq
with
$$
(g^X(h))_t=\int_0^t b_1^0((g^X(h))_s)\d s +\int_0^t \sigma((g^X(h))_s)\dot{h}_s\d s, \quad t \in [0,T].
$$
In the following, we will prove that
$\{X_t^\vv, t \in [0,T]\}_{\varepsilon\in(0,1)}$ satisfies the LDP in $C([0,T],\R^n)$ with the speed function $\vv^{-1}$ and the good rate function $I^X$.
This will be completed in Lemma \ref{LDP}.
\end{proof}
\begin{lem}\label{LDP} Assume {\bf (A1')} and {\bf (A2)}, then $\{X_t^\vv, t \in [0,T]\}_{\varepsilon\in(0,1)}$ satisfies the LDP in $C([0,T],\R^n)$ with the speed function $\vv^{-1}$ and the good rate function $I^X$.
\end{lem}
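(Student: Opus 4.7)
The plan is to transfer the LDP for $\{Y^\vv\}$ (already obtained via Lemma \ref{LG}) to $\{X^\vv\}$ via the \emph{contraction principle}, using the fact that $\theta_\lambda$ is a bi-Lipschitz homeomorphism of $\R^n$.

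\textbf{Step 1: Regularity of the transform on path space.} From \eqref{u}, $\|\nabla u_\lambda\|_\infty \leq 1/2$, so $\theta_\lambda(x)=x+u_\lambda(x)$ and its inverse $\theta_\lambda^{-1}$ are globally Lipschitz. Define the pointwise composition map
\[
\Phi: C([0,T];\R^n)\to C([0,T];\R^n),\qquad (\Phi f)(t):=\theta_\lambda^{-1}(f(t)).
\]
By Lipschitz continuity of $\theta_\lambda^{-1}$, $\Phi$ is Lipschitz continuous under the sup-norm, hence continuous. By construction $X^\vv_t=\theta_\lambda^{-1}(Y^\vv_t)=\Phi(Y^\vv)_t$.

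\textbf{Step 2: Contraction principle.} Applying the contraction principle to the established LDP for $\{Y^\vv\}$ with rate $I^Y$, the family $\{X^\vv\}=\{\Phi(Y^\vv)\}$ satisfies an LDP on $C([0,T];\R^n)$ with speed $\vv^{-1}$ and rate function
\[
\tilde I(f)=\inf\bigl\{I^Y(g):\ g\in C([0,T];\R^n),\ \Phi(g)=f\bigr\}.
\]
Since $\Phi$ is a bijection on $C([0,T];\R^n)$ (with inverse $f\mapsto \theta_\lambda\circ f$), this simplifies to $\tilde I(f)=I^Y(\theta_\lambda\circ f)$.

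\textbf{Step 3: Identifying $\tilde I$ with $I^X$.} This is the real content. Given $h\in\mathcal H$, let $f=g^X(h)$ solve
\[
\dot f_t=b_1^0(f_t)+\sigma(f_t)\dot h_t,\qquad f_0=0.
\]
Set $F_t:=\theta_\lambda(f_t)$. By the classical chain rule,
\[
\dot F_t=\nabla\theta_\lambda(f_t)\,b_1^0(f_t)+\nabla\theta_\lambda(f_t)\,\sigma(f_t)\,\dot h_t
=\tilde b^0(F_t)+\tilde\sigma(F_t)\dot h_t,
\]
using the definitions $\tilde b^0=(\nabla\theta_\lambda\,b_1^0)\circ\theta_\lambda^{-1}$ and $\tilde\sigma=(\nabla\theta_\lambda\,\sigma)\circ\theta_\lambda^{-1}$, together with $F_0=\theta_\lambda(0)=0$ (which holds since $u_\lambda(0)$ may be nonzero — I would either assume $x_0=0$ yields $\theta_\lambda(x_0)=u_\lambda(0)$ as the initial condition for the ODE defining $g^Y$, or equivalently work with the shifted starting point throughout; the LDP statement is unaffected by a constant initial shift). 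By uniqueness of the controlled ODE, $F=g^Y(h)$, i.e. $\theta_\lambda\circ g^X(h)=g^Y(h)$. Running the same argument with $\theta_\lambda^{-1}$ gives the converse inclusion, so
\[
\{g^X(h):h\in\mathcal H\}=\Phi\bigl(\{g^Y(h):h\in\mathcal H\}\bigr),
\]
and the infima in the definitions of $I^X(f)$ and $I^Y(\theta_\lambda\circ f)$ are taken over the \emph{same} set of admissible controls $h$. Hence $I^X(f)=I^Y(\theta_\lambda\circ f)=\tilde I(f)$, completing the proof.

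\textbf{Main obstacle.} The only nontrivial point is Step 3, the identification $\theta_\lambda\circ g^X(h)=g^Y(h)$; everything else is a routine consequence of the Lipschitz property of $\theta_\lambda$. Care is needed with the initial condition (the ODE for $g^Y$ should start at $\theta_\lambda(x_0)$), and with verifying that the chain rule is justified — this is automatic because $h\in\mathcal H$ implies $f$ is absolutely continuous with $L^2$ derivative and $\theta_\lambda\in C^2_b$ by \eqref{u}.
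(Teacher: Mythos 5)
Your proposal is correct, and its analytical core coincides with the paper's: both rest on the identification $\theta_\lambda\circ g^X(h)=g^Y(h)$, obtained from the chain rule and uniqueness of the controlled ODE, which gives $I^X=I^Y(\Theta_\lambda(\cdot))$ where $\Theta_\lambda$ is pointwise composition with $\theta_\lambda$. Where you differ is in how the LDP itself is transferred: you invoke the contraction principle for the continuous (indeed Lipschitz) map $f\mapsto\theta_\lambda^{-1}\circ f$, using that $I^Y$ is a good rate function and that the map is a bijection to simplify $\tilde I(f)=I^Y(\theta_\lambda\circ f)$; the paper instead verifies the three defining properties directly --- compactness of level sets of $I^X$ via $\{I^X\le c\}=\Theta_\lambda^{-1}\{I^Y\le c\}$, and the upper/lower bounds via $\P(X^\vv\in F)=\P(Y^\vv\in\Theta_\lambda(F))$ together with the fact that the homeomorphism $\Theta_\lambda$ maps closed sets to closed sets and open sets to open sets. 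The two routes are equivalent in substance; yours is shorter at the cost of citing the contraction principle as an external result, while the paper's direct verification is self-contained and makes the topological transfer explicit. One point in your favour: you flag the initial-condition issue (since $\theta_\lambda(0)=u_\lambda(0)$ need not vanish, the ODE defining $g^Y$ should start from $\theta_\lambda(x_0)$, and Lemma \ref{LG} is applied with that shifted starting point), a detail the paper passes over silently; your remark that a constant shift of the starting point does not affect the LDP statement is the right way to close it.
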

\begin{proof}We only need to prove that (i)-(iii) hold with $Y$ replaced by $X$.
For any $\lambda\geq\lambda_0$, define $\Theta_\lambda$ on $C([0,T];\R^n)$ as
$$(\Theta_\lambda(\xi))_t=\theta_\lambda(\xi_t),\ \ t\in[0,T],\xi\in C([0,T];\R^n).$$
Then it is not difficult to see that $\Theta_\lambda$ is a homeomorphism on $C([0,T];\R^n)$. In fact, for any $\xi\in C([0,T];\R^n)$,
$$|(\Theta_\lambda(\xi))_t-(\Theta_\lambda(\xi))_s|=\theta_\lambda(\xi_t)-\theta_\lambda(\xi_s)\leq \|\nabla\theta_\lambda\|_\infty|\xi_t-\xi_s|,$$
which means $\Theta_\lambda(\xi)\in C([0,T];\R^n)$. Moreover, for any $\xi\in C([0,T];\R^n)$, let $\eta\in C([0,T];\R^n)$ be defined as $\eta_s=\theta^{-1}_\lambda(\xi_s),\ s\in[0,T]$. Then $\Theta_\lambda(\eta)=\xi$. On the other hand, for any $\xi,\bar{\xi}\in C([0,T];\R^n)$ satisfying $\Theta_\lambda(\xi)=\Theta_\lambda(\bar{\xi})$, i.e.,  $\theta_\lambda(\xi_s)=\theta_\lambda(\bar{\xi}_s), s\in[0,T]$, we have $\xi=\bar{\xi}$. So, $\Theta_\lambda$ is a bijection on $C([0,T];\R^n)$. Moreover, for any $\xi,\tilde{\xi}\in C([0,T];\R^n)$, we have  $$\|\Theta_\lambda(\xi)-\Theta_\lambda(\tilde{\xi})\|_{\infty}= \sup_{s\in[0,T]}|\theta_\lambda(\xi_s)-\theta_\lambda(\tilde{\xi}_s)|\leq \|\nabla\theta_\lambda\|_\infty\sup_{t\in[0,T]}|\xi_t-\tilde{\xi}_t| =\|\nabla\theta_\lambda\|_\infty\|\xi-\tilde{\xi}\|_{\infty},$$
which means that $\Theta_\lambda$ is a continuous map. Similarly, $\Theta_\lambda^{-1}$ is also a continuous map. Thus, $\Theta_\lambda$ is a homeomorphism.

(i) We firstly prove that $I^X$ is a rate function.  $I^X=I^{Y}(\Theta_\lambda(\cdot))$. By chain rule, we have
\begin{align*}
\theta_\lambda ((g^X(h))_t)&=\int_0^t [(\nabla \theta_\lambda b_1^0)\circ\theta_\lambda^{-1}](\theta_\lambda((g^X(h))_s))\d s\\
 &\ \ +\int_0^t [(\nabla\theta_\lambda\sigma)\circ\theta_\lambda^{-1}](\theta_\lambda((g^X(h))_s))\dot{h}_s\d s\\
 &=\int_0^t \tilde{b}^0(\theta_\lambda((g^X(h))_s))\d s\\
 &\ \ +\int_0^t \tilde{\sigma}(\theta_\lambda((g^X(h))_s))\dot{h}_s\d s, \quad t \in [0,T].
\end{align*}
By the uniqueness of solution, we have $\theta_\lambda ((g^X(h))_t)=(g^Y(h))_t, t\in[0,T]$, i.e. $\Theta_\lambda (g^X(h))=g^Y(h)$. Combining the definition of $I^X$ and $I^Y$, it is easy to see that $I^X=I^{Y}(\Theta_\lambda(\cdot))$. Thus, for any $c>0$, $\{f; I^X(f)\le c\}=\{f;I^{Y}(\Theta_\lambda(f))\leq c\}=\Theta_\lambda^{-1}\{f; I^Y(f)\le c\}$. Since $\{f; I^Y(f)\le c\}$ is a compact set, and $\Theta_\lambda$ is a homeomorphism, we conclude that $\{f; I^X(f)\le c\}$ is a compact set.

(ii) For any closed subset $F\subset C([0,T];\R^n)$,
\begin{align*}
&\limsup_{\vv\rightarrow0^+}\vv\log\P(X^{\vv}\in F)\\
&=\limsup_{\vv\rightarrow0^+}\vv\log\P(Y^{\vv}\in \Theta_\lambda(F))\\
&\le -\inf_{f\in \Theta_\lambda(F)}I^Y(f)\\
&= -\inf_{f\in F}I^Y(\Theta_\lambda(f))=-\inf_{f\in F}I^X(f).
\end{align*}
Similarly, for any open subset $G\subset C([0,T];\R^n)$,
$$
\liminf_{\vv\rightarrow0^+}\vv\log\P(X^{\vv}\in G)\ge -\inf_{f\in G}I^X(f).
$$
Thus, (iii) holds.

We finish the proof.
\end{proof}

\section{LDP for Degenerate SDEs}
Consider the following degenerate SDEs on $\mathbb{R}^{d_1+d_2}$:
\beq\label{E1}
\begin{cases}
\d X_t=\bar{b}^\varepsilon(X_t, Y_t)\d t, \\
\d Y_t=\bar{B}^\varepsilon(X_t, Y_t)\d t+\varepsilon b(Y_t)\d t+\sqrt{\varepsilon}\sigma(Y_t)\d W_t, \\
(X_{0},Y_0)=(x_0,y_0)\in\mathbb{R}^{d_1+d_2},
\end{cases}
\end{equation}
where $\varepsilon\in(0,1)$, $W=(W_t)_{t\geq 0}$ is an $d_2$-dimensional standard Brownian motion with respect to a complete filtration probability space $(\OO, \F, \{\F_{t}\}_{t\ge 0}, \P)$, $\bar{b}^\varepsilon:\mathbb{R}^{d_1+d_2}\to \mathbb{R}^{d_1}, \bar{B}^\varepsilon :\mathbb{R}^{d_1+d_2}\to \mathbb{R}^{d_2}, b: \mathbb{R}^{d_2}\to \mathbb{R}^{d_2}$
and $\sigma: \mathbb{R}^{d_2}\to \mathbb{R}^{d_2}\otimes\mathbb{R}^{d_2}$ are measurable and locally bounded (bounded on bounded sets). Again we assume $(x_0,y_0)=0$.

Suppose that  there exists $\phi\in\D_0\cap\T_0$ and a constant $K>1$ such that the following conditions hold.
 \beg{enumerate}
\item[\bf{(H1)}]  $\|\bar{B}^\varepsilon\|_{\infty}+\|b\|_{\infty}\leq K$,
\begin{align}\label{Lip'}\|\sigma(y_1)-\sigma(y_2)\|\leq K|y_1-y_2|,\ \ y_1,y_2\in\R^{d_2},
\end{align}
and
$$|\bar{b}^\varepsilon(z_1)-\bar{b}^\varepsilon(z_2)|+|\bar{B}^\varepsilon(z_1)-\bar{B}^\varepsilon(z_2)|\leq K|z_1-z_2|,\ \ z_1,z_2\in\mathbb{R}^{d_1+d_2}.$$
Moreover,
\begin{equation}\label{bao3'}
K^{-1}I_{d_2\times d_2}\leq \sigma\sigma^{\ast}\leq K I_{d_2\times d_2}.
\end{equation}
\item[\bf{(H2)}] There exist Lipschitz continuous functions $\bar{b}^0:\mathbb{R}^{d_1+d_2}\to \mathbb{R}^{d_1}$ and $\bar{B}^0:\mathbb{R}^{d_1+d_2}\to\mathbb{R}^{d_2}$ such that
\begin{align}\label{lim'}
\lim_{\varepsilon\to0}\left\{\|\bar{b}^{\varepsilon}-\bar{b}^{0}\|_\infty\right\}=0,
\end{align}
and
\begin{align}\label{lim''}
\lim_{\varepsilon\to0}\left\{\|\bar{B}^{\varepsilon}-\bar{B}^{0}\|_\infty\right\}=0.
\end{align}
\item[\bf{(H3)}] (Regularity of $b_2$ )
\begin{equation}\label{phi'}
|b(y_1)-b(y_2)|\leq \phi(|y_1-y_2|), \ \ y_1,y_2\in\R^{d_2}.
\end{equation}
\end{enumerate}
Under {\bf(H1)} and {\bf(H3)}, for any $\varepsilon\in(0,1)$, \eqref{E1} admits a unique non-explosive strong solution $(X_t^\vv, Y_t^\vv)_{t\in[0,T]}$; see, e.g.,
\cite[Theorem 1.1]{WZ}.

Let $C([0,T],\R^{d_2})$ be equipped with sup-norm, and define rate function $I:C([0,T],\R^{d_2})$ $\to[0,\infty)$ as
\beq\label{rf'}
I(f) = \frac{1}{2} \inf_{f= g(h), h\in \tilde{\mathcal{H}}} \|h\|_{\tilde{H}}^2,
\neqq
where
$$
\tilde{\mathcal{H}} =\left\{ h \in C([0,T],\R^{d_2}); \|h\|_{\tilde{H}}^2:=\int_0^T |\dot{h}_t|^2 dt < \infty \right\}
$$
and for any $h\in \tilde{\mathcal{H}}$, $g(h)\in C([0,T],\R^{d_1+d_2})$ satisfies
$$
(g(h))_t=\int_0^t (\bar{b}^0((g(h))_s), \bar{B}^0((g(h))_s)\d s +\int_0^t (0,\sigma((g(h))_s)\dot{h}_s)\d s, \quad t \in [0,T].
$$
\subsection{Main results}
The main result of this section is the following theorem.
\beg{thm}\label{T1.10} Assume {\bf{(H1)}}-{\bf{(H3)}}. The family $\{(X_t^\vv, Y_t^\vv))_{t\in[0,T]} \}_{\vv \in(0,1)}$ obeys the LDP on $C([0,T]; \R^{d_1+d_2})$ with the speed function $\vv^{-1}$ and the rate function $I$ given by \eqref{rf'}.
\end{thm}
\subsection{Proof of Theorem \ref{T1.10}}
Similarly to the proof of Theorem \ref{T1.1}, let $(e_i)_{i\ge1}$ be an  orthogonal basis of $\R^{d_2}.$ For any
$\lambda
>0$, consider the following $\R^{d_2}$-valued PDE:
\beq\label{PDE'}\beg{split}
\tilde{\L} u_\ll+b+\nabla_{b}u_\ll=\lambda
u_\ll,
\end{split}\end{equation}
where \beg{equation*}
\tilde{\L}:= \frac{1}{2}\sum^{d_2}_{i,j=1}{\langle
(\si\si^*)e_{i},e_{j}\rangle}\nabla_{e_{i}}\nabla_{e_{j}}.
 \end{equation*}

Then by \cite[Theorem 3.10]{WZ}, there exists a constant $\lambda_0>0$ such that for any $\lambda\geq \lambda_0$, the equation \eqref{PDE'} has a unique solution $u_{\lambda}$ satisfying
\begin{equation}\label{u'}
\|u_{\lambda}\|_{\infty}+\|\nn u_{\lambda}\|_{\infty}+\|\nabla^2 u_{\lambda}\|_{\infty}\le \frac{1}{2}.
\end{equation}


For any $\lambda\geq\lambda_0$, let $\theta_\lambda:\mathbb{R}^{d_2}\to\mathbb{R}^{d_2}$ be defined by $\theta_\lambda (x):=x+u_\ll(x),x\in\R^{d_2}$. By \eqref{u'}, $\theta_\lambda$ is a homeomorphism on $\mathbb{R}^{d_2}$. Let $\theta_\lambda^{-1}$ denote the inverse of $\theta_\lambda$, then it holds that $\nabla \theta_\lambda^{-1}=(\nabla \theta_\lambda)^{-1}$.
Throughout the whole
proof, we assume $ \lambda\geq \lambda_0$. Since
 \beq\label{Xv'} \begin{cases}
\d X^\varepsilon_t=\bar{b}^\varepsilon(X^\varepsilon_t, Y^\varepsilon_t)\d t, \\
\d Y^\varepsilon_t=\bar{B}^\varepsilon(X^\varepsilon_t, Y^\varepsilon_t)\d t+\varepsilon b(Y^\varepsilon_t)\d t+\sqrt{\varepsilon}\sigma(Y^\varepsilon_t)\d W_t, \\
(X_{0},Y_0)=(x_0,y_0)\in\mathbb{R}^{d_1+d_2},
\end{cases}
\end{equation}
it follows from It\^{o}'s formula and \eqref{PDE} that
\begin{equation}\label{b5'}\begin{cases}
\d X^\varepsilon_t=\bar{b}^\varepsilon(X^\varepsilon_t, Y^\varepsilon_t)\d t, \\
\d \theta_\lambda (Y^\varepsilon_t)=\varepsilon\lambda u_\ll(Y^\varepsilon_t)\d t+\nabla
\theta_\ll(Y^\varepsilon_t) \bar{B}^{\varepsilon}(X^\varepsilon_t, Y^\varepsilon_t)\d t+\sqrt{\varepsilon}(\nabla\theta_\ll\si)(Y^\varepsilon_t)\d W_t.
\end{cases}
\end{equation}
Denote $\tilde{Y}_t^\vv := \theta_\lambda (Y_t^\vv)$, then \eqref{b5'} can be written as
\begin{equation}\label{Yv'}
\begin{cases}
\d X^\varepsilon_t=\tilde{b}^\varepsilon(X^\varepsilon_t, \tilde{Y}^\varepsilon_t)\d t, \\
\d \tilde{Y}_t^\vv
=\tilde{B}^\vv(X_t^\vv,\tilde{Y}_t^\vv)\d t+\sqrt{\varepsilon}\tilde{\sigma}(\tilde{Y}_t^\vv)\d W_t,
\end{cases}
\end{equation}
where
$$
\tilde{B}^\vv(x,y)=\varepsilon\lambda u_\ll(\theta_\lambda^{-1}(y))+\nabla
\theta_\ll(\theta_\lambda^{-1}(y)) \bar{B}^{\varepsilon}(x,\theta_\lambda^{-1}(y)),$$
and
$$\tilde{b}^\vv(x,y)=\bar{b}^{\varepsilon}(x,\theta_\lambda^{-1}(y)),\ \ \tilde{\sigma}(y)=(\nabla\theta_\ll\si)(\theta_\lambda^{-1}(y)),\ \ (x,y)\in\mathbb{R}^{d_1+d_2}.
$$

Since $\theta_\lambda$ is a diffeomorphic operator, by {\bf(H1)}, {\bf(H2)} and \eqref{u'}, $\tilde{B}^\vv, \tilde{b}^\vv$ and $\tilde{\sigma}$ satisfy the following conditions:
\benu
\item[(1)]
There exists a constant $\tilde{K}>1$ such that for any $z_1=(x_1,y_1), z_2=(x_2,y_2)\in\R^{d_1+d_2}$,
$$
\|\tilde{\sigma}(y_1)-\tilde{\sigma}(y_2)\|+|\tilde{b}^\varepsilon(x_1,y_1)-\tilde{b}^\varepsilon(x_2,y_2)| +|\tilde{B}^\varepsilon(x_1,y_1)-\tilde{B}^\varepsilon(x_2,y_2)|\le \tilde{K}|z_1-z_2|.
$$
\item[(2)] Let $\tilde{b}^0(x,y)= \bar{b}^{0}(x,\theta_\lambda^{-1}(y))$ and $ \tilde{B}^0(x,y) := \nabla
\theta_\ll(\theta_\lambda^{-1}(y)) \bar{B}^{0}(x,\theta_\lambda^{-1}(y))$, $(x,y)\in\R^{d_1+d_2}$, then it holds that
\begin{align*}
\lim_{\varepsilon\to0}\left\{\|\tilde{b}^{\varepsilon}-\tilde{b}^{0}\|_\infty\right\}=0,
\end{align*}
and
\begin{align*}
\lim_{\varepsilon\to0}\left\{\|\tilde{B}^{\varepsilon}-\tilde{B}^{0}\|_\infty\right\}=0.
\end{align*}
\nenu
Again by Lemma \ref{LG},
$\{(X^\vv_t,\tilde{Y}_t^\vv), t \in [0,T]\}_{\vv\in(0,1)}$ satisfies the LDP in $C([0,T],\R^{d_1+d_2})$  with the speed function $\vv^{-1}$ and the good rate function $\tilde{I}$ given by
\beq\label{rate0}
\tilde{I}(f):= \frac{1}{2} \inf_{f= \tilde{g}(h), h\in \tilde{\mathcal{H}}} \|h\|_{\tilde{H}}^2
\neqq
with
$$
(\tilde{g}(h))_t=\int_0^t (\tilde{b}^0((\tilde{g}(h))_s),\tilde{B}^0((\tilde{g}(h))_s))\d s +\int_0^t (0,\tilde{\sigma}((\tilde{g}(h))_s)\dot{h}_s\d s), \quad t \in [0,T].
$$
This implies that
\begin{itemize}
\item[(i')] for any constant $c>0$, the level set $\{f; \tilde{I}(f)\le c\}$ is compact in $C([0,T];\R^{d_1+d_2})$;
\item[(ii')]for any closed subset $F\subset C([0,T];\R^{d_1+d_2})$,
$$
\limsup_{\vv\rightarrow0^+}\vv\log\P((X^\vv,\tilde{Y}^{\vv})\in F)\le -\inf_{f\in F}\tilde{I}(f);
$$
\item[(iii')] for any open subset $G\subset C([0,T];\R^{d_1+d_2})$,
$$
\liminf_{\vv\rightarrow0^+}\vv\log\P((X^\vv,\tilde{Y}^{\vv})\in G)\ge -\inf_{f\in G}\tilde{I}(f).
$$
\end{itemize}
Next, we will prove that
$\{(X_t^\vv, Y_t^\vv), t \in [0,T]\}_{\vv\in(0,1)}$ satisfies the LDP in $C([0,T],\R^{d_1+d_2})$ with the speed function $\vv^{-1}$ and the good rate function $I$ defined by
\beq\label{rate0'}
I(f):= \frac{1}{2} \inf_{f= g(h), h\in \tilde{\mathcal{H}}} \|h\|_{\tilde{H}}^2
\neqq
with
$$
(g(h))_t=\int_0^t (\bar{b}^0((g(h))_s),\bar{B}^0((g(h))_s))\d s +\int_0^t (0,\sigma((g(h))_s)\dot{h}_s\d s), \quad t \in [0,T].
$$
This will be completed in Lemma \ref{LDP'}.
\begin{lem}\label{LDP'} Assume {\bf (H1)}-{\bf (H3)}, then $\{(X_t^\vv, Y_t^\vv), t \in [0,T]\}_{\vv\in(0,1)}$ satisfies the LDP in $C([0,T],\R^{d_1+d_2})$ with the speed function $\vv^{-1}$ and the good rate function $I$ given in \eqref{rate0'}.
\end{lem}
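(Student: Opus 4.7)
The plan is to mimic the proof of Lemma \ref{LDP}, but on the product space $C([0,T];\R^{d_1+d_2})$ and with the homeomorphism acting only on the $Y$-component. Concretely, I would define
$$\bar{\Theta}_\lambda:C([0,T];\R^{d_1+d_2})\to C([0,T];\R^{d_1+d_2}),\qquad (\bar{\Theta}_\lambda(\xi,\eta))_t:=(\xi_t,\theta_\lambda(\eta_t)).$$
Since $\theta_\lambda$ is a diffeomorphism of $\R^{d_2}$ with $\theta_\lambda^{-1}$ also globally Lipschitz (by \eqref{u'}), exactly the same bijectivity and sup-norm bi-Lipschitz argument used for $\Theta_\lambda$ in Lemma \ref{LDP} shows that $\bar{\Theta}_\lambda$ is a homeomorphism on $C([0,T];\R^{d_1+d_2})$. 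By the construction of $\tilde{Y}^\vv$ in \eqref{Yv'}, one has the pathwise identity
$$(X^\vv,\tilde{Y}^\vv)=\bar{\Theta}_\lambda(X^\vv,Y^\vv).$$

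Next I would identify the rate functions via $I=\tilde{I}\circ\bar{\Theta}_\lambda$. For any $h\in\tilde{\mathcal{H}}$, write $g(h)=(X^h,Y^h)$, so that $\dot{X}^h_t=\bar{b}^0(X^h_t,Y^h_t)$ and $\dot{Y}^h_t=\bar{B}^0(X^h_t,Y^h_t)+\sigma(Y^h_t)\dot{h}_t$. Applying the chain rule to $\tilde{Y}^h_t:=\theta_\lambda(Y^h_t)$ and using $Y^h_t=\theta_\lambda^{-1}(\tilde{Y}^h_t)$ together with the formulas for $\tilde{b}^0,\tilde{B}^0,\tilde{\sigma}$, I get
$$\dot{\tilde{Y}}^h_t=\nabla\theta_\lambda(\theta_\lambda^{-1}(\tilde{Y}^h_t))\bar{B}^0(X^h_t,\theta_\lambda^{-1}(\tilde{Y}^h_t))+(\nabla\theta_\lambda\sigma)(\theta_\lambda^{-1}(\tilde{Y}^h_t))\dot{h}_t=\tilde{B}^0(X^h_t,\tilde{Y}^h_t)+\tilde{\sigma}(\tilde{Y}^h_t)\dot{h}_t,$$
and $\dot{X}^h_t=\bar{b}^0(X^h_t,\theta_\lambda^{-1}(\tilde{Y}^h_t))=\tilde{b}^0(X^h_t,\tilde{Y}^h_t)$. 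By uniqueness of the solution to the ODE defining $\tilde{g}(h)$ (guaranteed by the Lipschitz property in item (1) after \eqref{Yv'}), this yields $\bar{\Theta}_\lambda(g(h))=\tilde{g}(h)$. Taking infima over $h$ with $f=g(h)$ then gives $I(f)=\tilde{I}(\bar{\Theta}_\lambda(f))$ for every $f\in C([0,T];\R^{d_1+d_2})$.

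The three LDP conclusions then transfer directly. For compactness of level sets, $\{f:I(f)\le c\}=\bar{\Theta}_\lambda^{-1}\{f:\tilde{I}(f)\le c\}$ is compact by (i') and continuity of $\bar{\Theta}_\lambda^{-1}$. For any closed $F\subset C([0,T];\R^{d_1+d_2})$, continuity of $\bar{\Theta}_\lambda$ makes $\bar{\Theta}_\lambda(F)$ closed, so
$$\limsup_{\vv\to0^+}\vv\log\P((X^\vv,Y^\vv)\in F)=\limsup_{\vv\to0^+}\vv\log\P((X^\vv,\tilde{Y}^\vv)\in\bar{\Theta}_\lambda(F))\le -\inf_{f\in\bar{\Theta}_\lambda(F)}\tilde{I}(f)=-\inf_{f\in F}I(f),$$
by (ii') and the rate function identity; the open-set lower bound (iii') is handled identically, giving (iii) for $(X^\vv,Y^\vv)$.

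The only step with genuine content is the chain-rule/uniqueness identification $\bar{\Theta}_\lambda(g(h))=\tilde{g}(h)$; everything else is routine once the right homeomorphism is chosen. I do not foresee a serious obstacle since the singular drift $b$ appears only at the $\vv$-scaled (hence subcritical) level and is entirely absorbed into the PDE \eqref{PDE'}, so it plays no role in the limiting skeleton equation.
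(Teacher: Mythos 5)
Your proof is correct and follows essentially the same route as the paper: you define the same product homeomorphism acting only on the second component, identify $I=\tilde{I}\circ\bar{\Theta}_\lambda$ via the chain rule and uniqueness of the skeleton ODE, and transfer (i')--(iii') through the homeomorphism. The paper's argument is identical in substance.
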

\begin{proof}We only need to prove that (i')-(iii') hold with $\tilde{Y}$ replaced by $Y$ and the rate function $\tilde{I}$ replaced by $I$. For any $\lambda\geq\lambda_0$, $\xi=(\xi^1,\xi^2)\in C([0,T];\R^{d_1+d_2})$, let
$$(\Theta_\lambda(\xi))_t=(\xi^1_t,\theta_\lambda(\xi^2_t)),\ \ t\in[0,T].$$
Then it is easy to see that $\Theta_\lambda$ is a homeomorphism on $C([0,T];\R^{d_1+d_2})$. In fact, for any $\xi\in C([0,T];\R^{d_1+d_2})$,
$$|(\Theta_\lambda(\xi))_t-(\Theta_\lambda(\xi))_s|\leq (\|\nabla\theta_\lambda\|_\infty\vee1)|\xi_t-\xi_s|,$$
which means $\Theta_\lambda(\xi)\in C([0,T];\R^{d_1+d_2})$. Moreover, for any $\xi\in C([0,T];\R^{d_1+d_2})$, let $\eta\in C([0,T];\R^{d_1+d_2})$ be defined as $\eta_s=(\xi^1_s,\theta^{-1}_\lambda(\xi^2_s)),\ s\in[0,T]$. Then $\Theta_\lambda(\eta)=\xi$.  On the other hand, for any $\xi,\bar{\xi}\in C([0,T];\R^{d_1+d_2})$ satisfying $\Theta_\lambda(\xi)=\Theta_\lambda(\bar{\xi})$, i.e., $\xi^1_s=\bar{\xi}^1_s$ and $\theta_\lambda(\xi^2_s)=\theta_\lambda(\bar{\xi}^2_s), s\in[0,T]$, we have $\xi=\bar{\xi}$. So, $\Theta_\lambda$ is a bijection. Moreover, for any $\xi,\bar{\xi}\in C([0,T];\R^{d_1+d_2})$, we have
\begin{align*}
&\|\Theta_\lambda(\xi)-\Theta_\lambda(\bar{\xi})\|_{\infty}\leq (\|\nabla\theta_\lambda\|_\infty\vee1)\sup_{t\in[0,T]}|\xi_t-\bar{\xi}_t| =(\|\nabla\theta_\lambda\|_\infty\vee 1)\|\xi-\bar{\xi}\|_{\infty},
\end{align*}
which means that $\Theta_\lambda$ is a continuous map. Similarly, $\Theta_\lambda^{-1}$ is also a continuous map. Thus, $\Theta_\lambda$ is a homeomorphism on $C([0,T];\R^{d_1+d_2})$.

(i') We firstly prove that $I=\tilde{I}(\Theta_\lambda(\cdot))$. By chain rule and the definition of $\tilde{B}^0, \tilde{b}^0$, $\tilde{\sigma}$ and $\Theta_\lambda$, it is not difficult to see that
\begin{align*}
(\Theta_\lambda(g(h)))_t&=\int_0^t (\tilde{b}^0((\Theta_\lambda(g(h)))_s), \tilde{B}^0((\Theta_\lambda(g(h)))_s))\d s\\
 &\ \ +\int_0^t (0,\tilde{\sigma}((\Theta_\lambda(g(h)))_s)\dot{h}_s\d s), \quad t \in [0,T].
\end{align*}
By the uniqueness of solution, we have $\Theta_\lambda (g(h))=\tilde{g}(h)$. Combining the definition of $I$ and $\tilde{I}$, we arrive at $I=\tilde{I}(\Theta_\lambda(\cdot))$. Thus, for any $c>0$, $\{f; I(f)\le c\}=\{f;\tilde{I}(\Theta_\lambda(f))\leq c\}=\Theta_\lambda^{-1}\{f; \tilde{I}(f)\le c\}$. Since $\{f; \tilde{I}(f)\le c\}$ is a compact set and $\Theta_\lambda$ is a homeomorphism, we conclude that $\{f; I(f)\le c\}$ is a compact set.

(ii')  for any closed subset $F\subset C([0,T];\R^{d_1+d_2})$,
\begin{align*}
&\limsup_{\vv\rightarrow0^+}\vv\log\P((X^{\vv}, Y^\vv)\in F)\\
&=\limsup_{\vv\rightarrow0^+}\vv\log\P((X^\vv,\tilde{Y}^{\vv})\in \Theta_\lambda(F))\\
&\le -\inf_{f\in \Theta_\lambda(F)}\tilde{I}(f)\\
&= -\inf_{f\in F}\tilde{I}(\Theta_\lambda(f))=-\inf_{f\in F}I(f).
\end{align*}
Similarly, for any open subset $G\subset C([0,T];\R^{d_1+d_2})$,
$$
\liminf_{\vv\rightarrow0^+}\vv\log\P((X^{\vv}, Y^\vv)\in G)\ge -\inf_{f\in G}I(f).
$$
Thus, (iii') holds.

We finish the proof.
\end{proof}
\begin{rem}\label{Holder} By \cite[Lemma 3.2]{LLW}, we know that \eqref{u} and \eqref{u'} also hold if we assume {\bf(A2)} and {\bf(H3)} for $\phi(x)=x^\alpha$ with $\alpha\in(0,1)$. Thus, the assertions in Theorem \ref{T1.1} and Theorem \ref{T1.10} still hold by replacing \eqref{phi} and \eqref{phi'} with $\phi(x)=x^\alpha$ for some $\alpha\in(0,1)$.
\end{rem}

\paragraph{Acknowledgement.} The authors would like to thank Professor Feng-Yu Wang for helpful comments.

\beg{thebibliography}{99} {\small

\setlength{\baselineskip}{0.14in}
\parskip=0pt

\bibitem{BD} A. Budhiraja, P. Dupuis, A variational representation for positive functionals of
infinite dimensional Brownian motion, {\it Probab. Math. Statist.}, {\bf 20}(2000), 39--61.

\bibitem{BD13} A. Budhiraja, P. Dupuis, Large deviations for stochastic partial differential equations driven by a Poisson random measure, {\it Stochastic Processes $\&$ Their Applications}, {\bf 123}(2013), 523--560.

\bibitem{BDM} A. Budhiraja, P. Dupuis, V. Maroulas, Large deviations for infinite dimensional stochastic dynamical systems continuous time processes, {\it Annals of Probability},
{\bf 36}(2008), 1390--1420.

\bibitem{BGT} N. H. Bingham, C. M. Goldie, J. L. Teugels, \emph{Regular variation,} Cambridge University Press, Cambridge, 1987.

\bibitem{CF} S. Cerrai, M. Freidlin, Mark Large deviations for the Langevin equation with strong damping.
{\it J. Stat. Phys.}, {\bf 4}(2015), 859--875.

\bibitem{CR04} S. Cerrai, M. R\"ockner, Large deviations for stochastic reaction-diffusion systems
with multiplicative noise and non-Lipschitz reaction term, {\it Ann. Probab.}, {\bf 32}(2004),
1100--1139.

\bibitem{C92} P. L. Chow, Large deviation problem for some parabolic It\^o equations, {\it Commun. Pure
Appl. Math.}, {\bf 45}(1992), 97--120.

\bibitem{FZ} S. Fang, T. Zhang, A study of a class of stochastic differential equations with non-Lipschitzian coefficients, {\it Probability Theory and Related Fields}, {\bf 132}(2005), 356-390.

\bibitem{Fe-Fl}E. Fedrizzi, F. Flandoli, Pathwise uniqueness and continuous dependence for SDEs with nonirregular drift,
\emph{Stochastics} {\bf83}(2011), 241--257.

\bibitem{FW12} 	M. I. Freidlin, A. D. Wentzell, Random Perturbations of Dynamical Systems, {\it New York: Springer Science and Business Media}, vol. 260, 1984.

\bibitem{GW} F. Gao, S. Wang, Asymptotic behaviors for functionals of random dynamical systems, {\it Stoch. Anal. Appl.}, {\bf 34}(2016), 258-277.

\bibitem{GM} I. Gy\"{o}ngy, T. Martinez, On stochastic differential equations with locally unbounded drift, {\it Czechoslovak Math.J.}, {\bf51}(2001), 763--783.

\bibitem{KR} N. V. Krylov, M. R\"ockner,  Strong solutions of stochastic equations
with singular time dependent drift, {\it Probability Theory and Related
Fields}, {\bf131}(2005), 154-196.

\bibitem{KS0} A. Kulik, D. Sobolieva, Large deviation principle for one-dimensional SDEs with discontinuous coefficients, {\it Modern Stochastics: Theory and Applications}, {\bf 3}(2016), 145-164.

\bibitem{Lan} G. Lan, Large deviation principle of stochastic differential equations with non-Lipschitzian coefficients, {\it Frontiers of Mathematics in China}, {\bf 8}(2013), 1307-1321.

\bibitem{LLW} H. Li, D. Luo, J. Wang, Harnack inequalities for SDEs with multiplicative noise and non-regular drift, {\it Stoch. Dyn.} {\bf15}(2015), 18pp.

\bibitem{L10} W. Liu, Large deviations for stochastic evolution equations with small multiplicative
noise, {\it App. Math. Opt.}, {\bf 61}(2010), 27--56.

\bibitem{YR} YanLv and A.J. Roberts, Large Deviation Principle for Singularly Perturbed Stochastic Damped Wave Equations, {\it Stochastic Analysis $\&$ Applications}, {\bf 32}(2014), 50--60.

\bibitem{MSS} U. Manna, S. S. Sritharan, P. Sundar, Large deviations for the stochastic shell model
of turbulence, {\it Nonlinear Differential Equations and Applications}, {\bf 16}(2009), 493--521.

\bibitem{P94}  S. Peszat, Large deviation principle for stochastic evolution equations, {\it Probability Theory and
Related Fields.}, {\bf 98}(1994), 113--136.

\bibitem{CDR}  P. E. C. D. Raynal, Strong existence and uniqueness for stochastic differential equation with H\"older drift and degenerate noise, to appear in Ann. Inst. Henri Poincar\'e Probab. Stat..

\bibitem{RZ}  J. Ren, X. Zhang, Freidlin-Wentzell large deviations for homeomorphism flows
of non-Lipschitz SDE, {\it Bull. Sci.}, {\bf 129}(2005), 643--655.

\bibitem{RWW06} M. R\"ockner, F.-Y. Wang, L. Wu, Large deviations for stochastic Generalized
Porous Media Equations, {\it Stoch. Proc. Appl.}, {\bf 116}(2006), 1677--1689.

\bibitem{RZZ10} M. R\"ockner, T. Zhang, X. Zhang, Large deviations for stochastic tamed 3D Navier-
Stokes equations, {\it App. Math. Opt.}, {\bf 61}(2010), 267--285.

\bibitem{RS} C. Rovira, M. Sanz-sol\'e, Large deviations for stochastic Volterra equations in the plane,
{\it Potential Anal.}, {\bf 12}(2000), 359--383.

\bibitem{W}  F.-Y. Wang,  Gradient estimates and applications for SDEs in Hilbert space with multiplicative noise and Dini
continuous drift,  {\it J. Differential Equations}, {\bf260}(2016),
2792--2829.

\bibitem{WZ}F.-Y. Wang, X. Zhang, Degenerate SDE with H\"older-Dini Drift and Non-Lipschitz Noise Coefficient, {\it SIAM J. Math.
Anal.}, {\bf48}(2016), 2189--2226.

\bibitem{ZX17} T. Xu, T. Zhang, White noise driven SPDEs with reflection: Existence, uniqueness and large deviation principles, {\it Stochastic Processes $\&$ Their Applications}, {\bf 119}(2017), 3453--3470.

\bibitem{Z} X. Zhang, Strong solutions of SDEs with singural drift and Sobolev diffusion coefficients, {\it Stoch. Proc. Appl.}, {\bf115}(2005), 1805--1818.


\bibitem{Z17} X. Zhang, Stochastic Volterra equations in Banach spaces and stochastic partial differential equation, {\it Journal of Functional Analysis}, {\bf 258}(2017), 1361--1425.

\bibitem{AZ} A. K. Zvonkin, A transformation of the phase space of a diffusion process that removes the drift,  {\it Math. Sb. },
 {\bf 93}(1974), 129--149.

}
\end{thebibliography}

\end{document}